\documentclass[a4paper,twoside,11pt,reqno]{amsart}
\usepackage{fullpage}
\usepackage[T1]{fontenc}
\usepackage[utf8]{inputenc}

\usepackage{hyperref}
\usepackage[slantedGreeks, partialup, noDcommand]{kpfonts}
\usepackage{graphicx}
\usepackage[mathscr]{euscript}
\usepackage{caption}
\usepackage{booktabs}
\usepackage{tikz}
\usepackage{appendix}

\usetikzlibrary{trees}

\hypersetup{ocgcolorlinks=true,allcolors=testc}
\hypersetup{
	colorlinks   = true,
	citecolor    = black
}
\hypersetup{linkcolor=black}
\hypersetup{urlcolor=black}

\newtheorem{theorem}{Theorem}
\newtheorem{lemma}{Lemma}

\newtheorem*{definition*}{Definition}

\theoremstyle{definition}

\theoremstyle{remark}

\renewcommand{\epsilon}{\varepsilon}

\renewcommand{\phi}{\varphi}
\newcommand{\R}{\mathbb{R}}

\newcommand{\Z}{\mathbb{Z}}

\DeclareMathOperator{\E}{\mathbb{E}}

\def \R {\mathbb{R}}

\def \Z {\mathbb{Z}}

\def\bal{\begin{align*}}
\def\eal{\end{align*}}

\author{Xinyuan Xie}
\address{(X.~X.) Department of Mathematics, University of California, Irvine, CA 92697, USA}
\email{xinyuax7@uci.edu}

\title{Stability of Szarek's Inequality with Best Constant}

\thanks{}

\begin{document}

    \maketitle
	
    \begin{abstract}
Let $\varepsilon_1,\ldots,\varepsilon_n$ be independent Rademacher random variables. We settle the best constant $c$ such that
\[
 \E\left|\sum_{j=1}^n a_j\varepsilon_j\right|
 \ge\frac1{\sqrt2}+c\left|a-\frac{e_1+e_2}{\sqrt2}\right|
\]
holds for every $n\ge2$ and every unit vector $a\in\R^n$ with $a_1\ge\cdots\ge a_n\ge0$. We prove that the optimal constant is $c=(2-\sqrt2)^{3/2}/8$, with equality attained at $a=\frac12(1,1,1,1,0,\ldots,0)$ for $n\ge4$. This completes a line of research by De--Diakonikolas--Servedio, Eskenazis--Nayar--Tkocz and Fang--Wang.
\end{abstract}

\bigskip

{\footnotesize
\noindent {\em 2020 Mathematics Subject Classification.} 60E15; 52A40.

\noindent {\em Key words.} Khinchin inequality, Szarek's inequality, stability, Rademacher sums.
}

\section{Introduction}
\subsection{Background}
Let $\varepsilon_1,\varepsilon_2,\ldots,\varepsilon_n$ be independent Rademacher random variables. Littlewood \cite{Lit30} conjectured in 1930 that, for every unit vector $a$ in $\R^n$,
\begin{equation}\label{eq:szarek}
 \E\left|\sum_{j=1}^n a_j\varepsilon_j\right|
 \ge\frac1{\sqrt2}.
\end{equation}
Szarek \cite{Sza76} proved this inequality in 1976, with equality attained when two coefficients have absolute value $1/\sqrt2$ and all the others vanish. Szarek's inequality is the sharp $L_1$ case of the $L_p$ Khinchin inequalities, whose study goes back to Khinchin's work on the law of the iterated logarithm \cite{Khi23}. Haagerup \cite{Haa81} gave a different proof of \eqref{eq:szarek} and also determined the best constants in the $L_p$ Khinchin inequalities for $p>1$. Subsequent proofs of Szarek's inequality were given by Toma\-szew\-ski \cite{Tom87} and Lata{\char170}a and Oleszkiewicz \cite{LO94}, whose argument also yields the sharp extension to vector-valued coefficients in arbitrary normed spaces. We refer to \cite[Section~3.3]{NT23} and \cite{ENTDist24} for further background.

A natural question is whether \eqref{eq:szarek} admits a quantitative improvement away from its equality cases. Motivated by questions on the noise stability of Boolean linear threshold functions, De, Diakonikolas and Servedio \cite[Theorem~9]{DDS16} proved that there exists a universal constant $c>0$ such that, for every $n\ge2$ and every unit vector $a\in\R^n$ with $a_1\ge\cdots\ge a_n\ge0$,
\begin{equation}\label{eq:robust-known}
 \E\left|\sum_{j=1}^n a_j\varepsilon_j\right|
 \ge\frac1{\sqrt2}+c\left|a-\frac{e_1+e_2}{\sqrt2}\right|.
\end{equation}
A related quantitative improvement was subsequently obtained by Melbourne and Roberto \cite{MR22}, but neither work gives an explicit value of $c$ in \eqref{eq:robust-known}.

Sharp Khinchin-type inequalities are closely related to extremal section and projection problems \cite{NT23}, as are their stability questions, which are studied by similar methods. Chasapis, Nayar and Tkocz \cite{CNT22} proved stability for extremal hyperplane sections of $\ell_p$ balls, using, in the cube case, a local self-improvement of Ball's inequality in the spirit of \cite{DDS16}. Jakimiuk \cite{Jak26b} subsequently established the dual counterpart for hyperplane projections.

Recently, Eskenazis, Nayar and Tkocz \cite{ENT24} obtained the first explicit constant $c=8\cdot10^{-5}$ in \eqref{eq:robust-known}, motivated by the resilience of the Ball--Szarek hyperplane for projections of $\ell_q$ balls. They also established distributional stability of Szarek's inequality under perturbations of the Rademacher distribution \cite{ENTDist24}. On the other hand, Jakimiuk \cite{Jak26a} recently established Gaussian stability for the sharp $L_p$ Khinchin inequalities with $p\ge3$, although the best stability constant at the endpoint $p=1$ remained unknown. Very recently, Fang and Wang \cite[Theorem~3]{FW26} improved the constant in \eqref{eq:robust-known} to $c=1/36$. In the next theorem, we settle the best constant.

\subsection{Main Results}

\begin{theorem}\label{thm:stable}
For every $n\ge2$ and every unit vector $a$ in $\R^n$ with $a_1\ge\cdots\ge a_n\ge0$, we have
\begin{equation}\label{eq:stable}
 \E\left|\sum_{j=1}^n a_j\varepsilon_j\right|
 \ge\frac1{\sqrt2}+c\left|a-\frac{e_1+e_2}{\sqrt2}\right|,
\end{equation}
where $c=(2-\sqrt2)^{3/2}/8$ is optimal. Equality is attained by $a=\frac12(1,1,1,1,0,\ldots,0)$ for $n\ge4$.
\end{theorem}

When $a_1+a_2\ge1$, Theorem~\ref{thm:stable} follows from Fang and Wang's estimate \cite[Theorem~1]{FW26}, whose proof is similar in spirit to the self-improvement argument in \cite[Section~5.1, Lemma~21]{ENT24}. The difficulty lies in the range $a_1+a_2\le1$, where Fang and Wang obtained $c=1/36$ by combining Haagerup's bound with the elementary estimate $\E|\sum_j a_j\varepsilon_j|\ge a_1$. We resolve this difficulty by proving a stronger lower bound in terms of Haagerup's function
\begin{equation}\label{eq:F}
 F(s)=\frac{2}{\sqrt{\pi s}}\,
       \frac{\Gamma((s+1)/2)}{\Gamma(s/2)},\qquad s>0,
\end{equation}
a bound which is tight for infinitely many values of $a_1+a_2$ tending to zero, see the next theorem.

\begin{theorem}\label{thm:stronger}
For every $n\ge2$ and every unit vector $a$ in $\R^n$ with $a_1\ge\cdots\ge a_n\ge0$ and $a_1+a_2\le1$, we have
\begin{equation}\label{eq:stronger}
 \E\left|\sum_{j=1}^n a_j\varepsilon_j\right|
 \ge F\!\left(\frac4{(a_1+a_2)^2}\right)
 \ge\frac34+\frac{1-(a_1+a_2)^2}{24}.
\end{equation}
The first inequality is attained when $2m$ nonzero coefficients are equal to $1/\sqrt{2m}$, for every integer $m\ge2$. \normalfont The proof proceeds as follows: applying the convexity of $s\mapsto F(1/s)$ to Haagerup's bound gives an estimate in terms of $\sum_j a_j^4$. If this estimate does not yield the first inequality in \eqref{eq:stronger}, then $a_2$ must be small relative to $a_1$. We then keep $\cos(a_1t)$ separate in the Fourier integral and compare the remaining factors with a Gaussian factor. The difficulty is to control the oscillatory tail beyond the interval on which these factors are nonnegative. Periodicity and symmetry reduce the cosine tail integrals to $[0,\pi/2]$, where a bound for the resulting kernel gives an error at most a constant times $a_2^2/\sqrt{1-a_1^2}$. The separation between $a_1$ and $a_2$ then makes this error small enough to prove \eqref{eq:stronger}.
\end{theorem}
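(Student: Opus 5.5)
We prove the first inequality in \eqref{eq:stronger} and treat the second as a one-variable estimate on $F$.

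\textbf{Step 1: the bound from Haagerup's inequality.} Haagerup's argument gives, for $a_j>0$,
\[
\E\Bigl|\sum_j a_j\epsilon_j\Bigr|\ge\sum_j a_j^2F(a_j^{-2}).
\]
The map $s\mapsto F(1/s)$ is convex, and the weights $a_j^2$ sum to one. Jensen's inequality therefore gives
\[
\E\Bigl|\sum_j a_j\epsilon_j\Bigr|\ge F\Bigl(\bigl(\textstyle\sum_j a_j^4\bigr)^{-1}\Bigr).
\]
Since $F$ is increasing, the first inequality follows whenever
\[
\sum_j a_j^4\le\frac{(a_1+a_2)^2}{4}.
\]
We always have $\sum_j a_j^4\le a_1^2$, because the sum of $a_j^2$ is one and each $a_j\le a_1$. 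So the desired condition holds unless $a_1^2>(a_1+a_2)^2/4$. More precisely, using $\sum_j a_j^4\le a_1^4+a_2^2(1-a_1^2)$, failure of the condition forces $a_2$ to be small relative to $a_1$, say $a_2\le\theta a_1$ for some explicit $\theta<1$.

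For the equality case $a_j=(2m)^{-1/2}$ we have $\sum_j a_j^4=1/(2m)=(a_1+a_2)^2/4$. Haagerup's bound is sharp for equal coefficients, so the inequality is attained there.

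\textbf{Step 2: the separated case.} In the remaining case we use the Fourier representation
\[
\E|S|=\frac2\pi\int_0^\infty \frac{1-\cos(a_1t)\prod_{j\ge2}\cos(a_jt)}{t^2}\,dt.
\]
We keep $\cos(a_1t)$ separate. On the interval $|t|\le\pi/(2a_2)$ every factor with $j\ge2$ is nonnegative, and there we compare it with a Gaussian:
\[
\prod_{j\ge2}\cos(a_jt)\le e^{-(1-a_1^2)t^2/2}.
\]
With the Gaussian in place, the integral $\int(1-\cos(a_1t)e^{-\sigma^2t^2/2})t^{-2}\,dt$ equals $\E|a_1\epsilon+\sigma G|$ with $\sigma^2=1-a_1^2$. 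This is an explicit function of $a_1$, and we check by calculus that it exceeds $F(4/(a_1+a_2)^2)$ with a margin. Since $a_1+a_2\le(1+\theta)a_1$ and $F$ is increasing, it suffices to compare at $a_2=\theta a_1$, where the margin depends on $a_1$ only.

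\textbf{Step 3: the oscillatory tail (main obstacle).} Beyond $\pi/(2a_2)$ the product changes sign, so the Gaussian comparison is unavailable. We bound the tail crudely by $|\cos(a_1t)|\,|\cos(a_2t)|\prod_{j\ge3}|\cos(a_jt)|$ weighted by $t^{-2}$. Substituting $u=a_2t$ and using periodicity and symmetry of $|\cos|$, the tail folds onto $[0,\pi/2]$ against the kernel $\sum_k(u+k\pi)^{-2}$, giving a contribution of order $a_2\cdot a_2=a_2^2$.

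The decay of the remaining factors still has to be used, because the product over $j\ge3$ contributes a Gaussian-type decay governed by $\sqrt{1-a_1^2}$. Taking this into account, we aim for an error of size $C a_2^2/\sqrt{1-a_1^2}$. We then check that this error is smaller than the Step 2 margin when $a_2\le\theta a_1$ and $a_1+a_2\le1$.

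The delicate regime is $a_1$ close to $1$, where the denominator $\sqrt{1-a_1^2}$ is small. There the constraint $a_1+a_2\le1$ forces $a_2\le 1-a_1$, which is of order $1-a_1^2$. The error is then of order $(1-a_1^2)^{3/2}$, which should be absorbed by the margin. Pinning down explicit constants here is the expected hard step.

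\textbf{Step 4: the second inequality.} Set $x=(a_1+a_2)^2\in(0,1]$. The second inequality asks that $G(x)=F(4/x)$ satisfy $G(x)\ge 3/4+(1-x)/24$. At $x=1$ we have $F(4)=3/4$, so it is enough to show that $G$ is convex in $x$ with $G'(1)\le -1/24$ in one-sided form. Convexity follows from the convexity of $F(1/s)$ already used in Step 1. For the derivative, $F'(4)$ is computable from digamma values at $2$ and $5/2$, giving $G'(1)=-4F'(4)$, and we verify numerically that $4F'(4)\ge1/24$.
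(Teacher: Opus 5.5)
\textbf{There is a genuine gap in Step~2, and Step~3 is not carried out.}

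The decisive problem is in Step~2. On $[0,\pi/(2a_2)]$ you do have $0\le P(t)\le e^{-\sigma^2t^2/2}$ for $P(t)=\prod_{j\ge2}\cos(a_jt)$. But the replacement $1-\cos(a_1t)P(t)\ge1-\cos(a_1t)e^{-\sigma^2t^2/2}$ holds only where $\cos(a_1t)\ge0$. In the separated case $a_1>a_2$ (the paper gets $a_1/a_2>9/5$), so $\cos(a_1t)<0$ on $(\pi/(2a_1),\pi/(2a_2))$. This lies inside your comparison interval, and there the inequality reverses.

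The only pointwise information you have there is $0\le P\le e^{-\sigma^2t^2/2}$. It bounds the defect by $\frac2\pi\int_{\pi/(2a_1)}^{\pi/(2a_2)}(\cos(a_1t))_-e^{-\sigma^2t^2/2}t^{-2}\,dt$, which does not vanish as $a_2\to0$; for $a_1\to1$ it tends to $\frac12-\frac1\pi$. So it is not covered by your error budget $Ca_2^2/\sqrt{1-a_1^2}$, and your main term cannot be $\E|a_1\epsilon+\sigma G|$.

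The missing idea is to bound $\cos(a_1t)P(t)\le(\cos(a_1t))_+e^{-\sigma^2t^2/2}$, which is valid because $P\ge0$ there. One then proves separately that $\frac2\pi\int_0^\infty\bigl(1-(\cos(xt))_+e^{-(1-x^2)t^2/2}\bigr)t^{-2}\,dt\ge\sqrt{2/\pi}$ for $0\le x\le1$. This is Lemma~\ref{lem:positive}. After $t\mapsto t/x$ it reduces to $\int_0^\infty e^{-\lambda t^2}\bigl((\cos t)_+-e^{-t^2/2}\bigr)t^{-2}\,dt\le0$, which follows from the sign of the numerator on $[0,3\pi/2]$ and the exact value $\sqrt{\pi/2}-\pi/4-1/2$ of the unweighted integral. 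As a bonus, the comparison term is nonnegative on the tail and can be discarded. The margin becomes $\sqrt{2/\pi}-F(4/(a_1+a_2)^2)\ge(\sqrt{2/\pi}-\frac34)(a_1+a_2)^2$ by convexity of $s\mapsto F(1/s)$.

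Your reduction to $a_2=\theta a_1$ is also backwards. $F(4/(a_1+a_2)^2)$ decreases in $a_2$, so for fixed $a_1$ the margin is smallest as $a_2\to0$, while the error is largest at $a_2=\theta a_1$. Margin and error must be compared jointly. The paper does this via $a_2^2/\sqrt{1-a_1^2}<(a_1+a_2)^2/6$, which combines $1-a_1^2\ge a_2(2a_1+a_2)/(a_1+a_2)^2$ with $a_1/a_2>9/5$. The latter needs $a_1+a_2\le1$ and the factorization $(a_1-a_2)\bigl(4a_1^2(a_1+a_2)-a_1-3a_2\bigr)$.

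Step~3 is the heart of the argument, and you leave it as a plan; its constants are not a formality. Bounding the tail by $|\cos(a_2t)|$ alone gives $O(a_2)$, not $O(a_2^2)$. To reach $Ca_2^2/\sqrt{1-a_1^2}$ you need:
\begin{itemize}
\item the weighted arithmetic--geometric mean inequality with weights $a_j^2/(1-a_1^2)$, $j\ge2$, which produces the exponents $(1-a_1^2)/a_j^2\ge3$;
\item a Gaussian bound on $[\pi/(2a_2),\pi/(2a_j)]$;
\item the estimate $\int_{\pi/2}^\infty|\cos t|^st^{-2}\,dt\le\frac13\sqrt{\pi/(2s)}$ of Lemma~\ref{lem:cosinetail}, obtained by folding onto $[0,\pi/2]$, the sharp kernel bound $\sqrt{\cos t}\,(\sin^{-2}t-t^{-2})\le\frac13$, and Kershaw's inequality;
\item finally, $\sum_{j\ge2}a_j^4\le a_2^2(1-a_1^2)$.
\end{itemize}
With main term $\sqrt{2/\pi}$ and ratio $1/6$, the final chain closes only if $C\le6\bigl(1-\frac34\sqrt{\pi/2}\bigr)\approx0.36$. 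The paper's $C=\frac13+\frac1{60}=\frac7{20}$ barely fits.

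By contrast, your Step~4 is correct and differs from the paper. The tangent line of the convex function $x\mapsto F(4/x)$ at $x=1$ gives $F(4/x)\ge\frac34+4F'(4)(1-x)$. The condition $4F'(4)=3\bigl(\frac{17}{24}-\ln2\bigr)\ge\frac1{24}$ is equivalent to $\ln2\le\frac{25}{36}$, which holds. The paper instead integrates $F'(s)\ge1/(6s^2)$ from $4$. Step~1 and the equality case match the paper; sharpness there uses that $2m$ is even, so the cosine product is nonnegative.
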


\subsection{Organization of Proofs}
In Section~\ref{sec:proofs}, we first state Lemmas~\ref{lem:F} and \ref{lem:tail} without proof and use them to prove Theorem~\ref{thm:stronger} and Theorem~\ref{thm:stable}. We then prove the lemmas in Sections~\ref{subsec:F} and \ref{subsec:tail}, respectively.

\subsection{Evaluation of LLM evolution}
Frontier reasoning LLMs are now able to solve difficult open problems in mathematics with human assistance. We feel that in addition to the mathematical results they produce, the reasons why they discover these results so effectively are also of interest. Studying the evolution of the models and their reasoning trajectories may help us better understand the principles underlying their effectiveness. To address this question, we release two such instances documenting the evolution of three different versions of GPT Pro from January to May 2026, from being able to make small improvements to the constant to eventually being capable of determining its optimal value.

\begin{flushleft}
\url{https://chatgpt.com/share/6ac15a1a-c0e4-83e9-80de-be6766b749de}\par
\smallskip
\url{https://chatgpt.com/share/6ab5bbf3-4c38-83e9-a171-d50176882646}
\end{flushleft}

\begin{center}
\small
\renewcommand{\arraystretch}{1.15}
\begin{tabular}{@{}llll@{}}
\toprule
\textbf{Link} & \textbf{Turns} & \textbf{Model} & \textbf{Dates (2026, Pacific Time)} \\
\midrule
First link & 1--7 & GPT-5.2 Pro & January 24--26 \\
 & 8--17 & GPT-5.4 Pro & March 8--9 \\
 & 18--22 & GPT-5.5 Pro & April 27 \\
\addlinespace
Second link & 1--2 & GPT-5.4 Pro & March 17 \\
 & 3--11 & GPT-5.5 Pro & April 27--29 \\
 & 12 & GPT-5.4 Thinking & April 29 \\
 & 13--19 & GPT-5.5 Pro & April 29--May 1 \\
\bottomrule
\end{tabular}
\end{center}

The detailed reasoning summaries and additional execution details are accessible through the source code of the web page using Chrome Developer Tools or similar tools.

\section*{Acknowledgements}
The author is very grateful to Paata Ivanisvili for helpful discussions.

\section{Proofs}\label{sec:proofs}
Throughout this section, $a$ is a unit vector with $a_1\ge\cdots\ge a_n\ge0$. Zero coefficients are omitted whenever reciprocal powers occur.

\begin{lemma}\label{lem:F}
The function $F$ is increasing, while $s\mapsto F(1/s)$ is decreasing and convex on $(0,\infty)$ and extends continuously to zero with value $\sqrt{2/\pi}$. For every unit vector $a$ in $\R^n$,
\begin{equation}\label{eq:Haagerup}
 \E\left|\sum_{j=1}^n a_j\varepsilon_j\right|
 \ge\sum_{j=1}^n a_j^2F(a_j^{-2})
 \ge F\!\left(\frac1{\sum_{j=1}^n a_j^4}\right).
\end{equation}
Moreover,
\begin{equation}\label{eq:scalar}
 F(s)\ge\frac{19}{24}-\frac1{6s},\qquad s\ge4.
\end{equation}
\end{lemma}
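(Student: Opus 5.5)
The plan is to prove the three parts of Lemma~\ref{lem:F} in turn: monotonicity and convexity of $F$, then the two inequalities in \eqref{eq:Haagerup}, then the scalar bound \eqref{eq:scalar}.

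\emph{Properties of $F$.} Write $F(s)=\sqrt{2/\pi}\,\frac{\Gamma((s+1)/2)}{\Gamma(s/2)\sqrt{s/2}}$. The basic fact is
\[
\E|G_1+\cdots+G_s|/\sqrt{s}=\sqrt{2/\pi}
\]
for Gaussians. The relevant input is that $\Gamma(x+\tfrac12)/(\Gamma(x)\sqrt x)$ increases to $1$ as $x\to\infty$. This follows from log-convexity of $\Gamma$, or directly from the derivative $\psi(x+\tfrac12)-\psi(x)-\tfrac1{2x}\ge0$, which the series $\psi'(x)=\sum_k (x+k)^{-2}$ makes evident. It gives that $F$ is increasing, that $F(1/s)$ is decreasing, and that $\lim_{s\to0}F(1/s)=\sqrt{2/\pi}$.

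For convexity of $g(u)=F(1/u)$, I will use Haagerup's integral representation
\[
F(s)=\frac{2}{\pi}\int_0^\infty \bigl(1-|\cos t|^{s}\bigr)\frac{dt}{t^2}\Big/\sqrt{s}\cdot(\text{normalization}),
\]
or, more cleanly, the known representation of $g$ as $\sqrt{2/\pi}$ minus a Laplace-type integral. An alternative is to differentiate twice in $u$ and show positivity of $\psi'$-combinations, using the series for $\psi'$ and $\psi''$ together with a Stirling-type bound. I expect convexity to be the main obstacle. My first attempt will be the substitution $s=1/u$ and a check that $u\mapsto \log\Gamma(\frac1{2u}+\frac12)-\log\Gamma(\frac1{2u})+\frac12\log u$ has its exponential convex, via the integral representation
\[
\log\frac{\Gamma(x+1/2)}{\Gamma(x)}=\tfrac12\log x+\int_0^\infty \Bigl(\ldots\Bigr)e^{-xt}\,dt,
\]
which expresses $g$ as $\sqrt{2/\pi}\exp(-h(u))$ with $h$ given by a completely monotone-type integral in $1/u$.

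\emph{The Haagerup inequality.} The first inequality in \eqref{eq:Haagerup} is Haagerup's theorem. Starting from
\[
\E|X|=\frac{2}{\pi}\int_0^\infty\Bigl(1-\prod_j\cos(a_jt)\Bigr)\frac{dt}{t^2},
\]
apply the weighted AM--GM/H\"older inequality $\prod_j|\cos(a_jt)|\le\sum_j a_j^2|\cos(a_jt)|^{a_j^{-2}}$. After the change of variables $t\mapsto t/a_j$, each term evaluates to $a_j^2F(a_j^{-2})$, using the identity
\[
\frac2\pi\int_0^\infty\bigl(1-|\cos t|^{s}\bigr)t^{-2}\,dt=\sqrt s\,F(s),
\]
which I will cite or verify via Fourier series of $|\cos|^s$. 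The second inequality is Jensen's inequality for the convex function $g$ with weights $a_j^2$ at the points $a_j^2$: we get $\sum a_j^2 g(a_j^2)\ge g\bigl(\sum a_j^4\bigr)$.

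\emph{The scalar bound.} For \eqref{eq:scalar}, set $u=1/s\in(0,1/4]$; we need $g(u)\ge \frac{19}{24}-\frac u6$. Since $g$ is convex, it suffices to check this at $u=1/4$ and to compare slopes there. We have $g(1/4)=F(4)=\frac{2}{\sqrt{4\pi}}\cdot\frac{\Gamma(5/2)}{\Gamma(2)}=\frac34$, which equals $\frac{19}{24}-\frac1{24}$, so the bound is tight at $u=1/4$. By convexity,
\[
g(u)\ge g(1/4)+g'(1/4)(u-1/4),
\]
so it suffices that $g'(1/4)\le -1/6$. This is an explicit digamma computation: $\psi(5/2)-\psi(2)$ is expressible via $\log 2$ and rationals. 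I expect this check to be a routine numeric verification.
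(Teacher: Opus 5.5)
The genuine gap is the convexity of $g(u)=F(1/u)$. You flag it as the main obstacle but only list candidate strategies without carrying any of them out. Both the second inequality in \eqref{eq:Haagerup} (your Jensen step) and your proof of \eqref{eq:scalar} depend on it, so as written those steps are unproved too. Moreover, the property you point to would not suffice. That $h$ is a completely monotone (Laplace-type) function of $s=1/u$ does not make $e^{-h(u)}$ convex in $u$. Already $u\mapsto e^{-t_0/u}$ is convex for $u<t_0/2$, and for $h=c\,e^{-t_0/u}$, with $u<t_0/2$ fixed and $c>0$ small, one gets $(e^{-h})''=(h'^2-h'')e^{-h}<0$. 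What is needed is the explicit kernel together with a rescaling that moves $u$ inside a concave function. From the series for $\psi$,
\[
(\log F)'(s)=-\frac1{2s}+\sum_{k\ge0}\Bigl(\frac1{s+2k}-\frac1{s+2k+1}\Bigr)=\frac12\int_0^\infty e^{-st}\tanh(t/2)\,dt .
\]
Integrating over $[1/u,\infty)$ (using $F(s)\to\sqrt{2/\pi}$) and then substituting $t\mapsto ut$ gives
\[
\log\frac{g(u)}{\sqrt{2/\pi}}=-\frac12\int_0^\infty e^{-t}\,\frac{\tanh(ut/2)}{t}\,dt .
\]
Since $\tanh$ is increasing and concave on $[0,\infty)$, the right-hand side is decreasing and convex in $u$. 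Hence $g$ is log-convex, and so convex. This is exactly the paper's argument, and the same formula gives $(\log F)'>0$ directly.

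The remaining steps are fine. For monotonicity, log-convexity of $\Gamma$ alone only yields Wendel's two-sided bounds, hence the limit $\sqrt{2/\pi}$, but not monotonicity. The series for $\psi'$ does work, because it shows $\psi$ is strictly concave, so $\psi(x+\tfrac12)>\tfrac12\bigl(\psi(x)+\psi(x+1)\bigr)=\psi(x)+\tfrac1{2x}$. Citing Haagerup's identity $\frac2\pi\int_0^\infty(1-|\cos t|^s)t^{-2}\,dt=\sqrt s\,F(s)$ is acceptable; the paper derives it from $\sum_k(t+k\pi)^{-2}=\sin^{-2}t$, integration by parts and the beta integral.

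Your route to \eqref{eq:scalar} genuinely differs from the paper's and is valid once convexity is available. You have $g'(1/4)=-16F'(4)=12\log2-\frac{17}{2}$, and $g'(1/4)\le-\frac16$ is equivalent to $\log2\le\frac{25}{36}$. This holds, but only with a margin of about $1.3\cdot10^{-3}$, so it needs an honest numerical estimate, e.g.\ partial sums of $\sum_k 1/(k2^k)$ with a geometric tail bound.

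The paper instead avoids convexity at this point. It uses $\tanh t\ge t-t^3/3$ to get $(\log F)'(s)\ge\frac1{4s^2}-\frac1{8s^4}$, multiplies by $F(s)\ge F(4)=\frac34$, and integrates $F'\ge\frac1{6s^2}$ from $4$ to $s$. Your line-through-$(1/4,3/4)$ argument is shorter, but it inherits the dependence on convexity.
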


\begin{lemma}\label{lem:tail}
For every unit vector $a$ in $\R^n$ with $a_1\ge\cdots\ge a_n\ge0$, $a_2>0$ and $a_1^2+3a_2^2\le1$, we have
\begin{equation}\label{eq:tail}
 \E\left|\sum_{j=1}^n a_j\varepsilon_j\right|
 \ge\sqrt{\frac2\pi}\left(1-\frac7{20}\frac{a_2^2}{\sqrt{1-a_1^2}}\right).
\end{equation}
\end{lemma}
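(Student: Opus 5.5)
The plan is to start from the Fourier representation of the first absolute moment,
\[
\E\left|\sum_{j=1}^n a_j\varepsilon_j\right|=\frac2\pi\int_0^\infty\frac{1-\prod_{j=1}^n\cos(a_jt)}{t^2}\,dt,
\]
to keep the factor $\cos(a_1t)$ separate, and to write $P(t)=\prod_{j\ge2}\cos(a_jt)$. Put $\sigma^2=1-a_1^2=\sum_{j\ge2}a_j^2$ and $G(t)=e^{-\sigma^2t^2/2}$. Then
\[
1-\cos(a_1t)P(t)=\bigl(1-\cos(a_1t)G(t)\bigr)+\cos(a_1t)\bigl(G(t)-P(t)\bigr).
\]
The first term integrates to $\E|a_1\varepsilon_1+\sigma Z|$ with $Z$ standard Gaussian. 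I expect this Gaussian-mixture main term to be at least $\sqrt{2/\pi}$. It equals $\sqrt{2/\pi}$ at $a_1=0$, and a one-variable computation in $b=a_1$ with $\sigma=\sqrt{1-b^2}$ should confirm the bound; if needed I would prove it by differentiating $b\mapsto \E|b+\sqrt{1-b^2}Z|$. What remains is to show
\[
\frac2\pi\int_0^\infty\frac{|G(t)-P(t)|}{t^2}\,dt\le\sqrt{\frac2\pi}\cdot\frac7{20}\,\frac{a_2^2}{\sigma},
\]
or a slightly sharper signed version of this bound.

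Split at $T=\pi/(2a_2)$. On $[0,T]$ every factor satisfies $|a_jt|\le\pi/2$ for $j\ge2$, so $0\le\cos(a_jt)\le e^{-a_j^2t^2/2}$ and hence $0\le P\le G$. Moreover $\log\cos x\ge -x^2/2-cx^4$ on $[0,\pi/2]$ for an explicit constant $c$ (something like $c\approx 0.1$ near the origin, larger near $\pi/2$). This gives $G-P\le G\bigl(1-e^{-c\sum_{j\ge2}a_j^4t^4}\bigr)\le cG\sum_{j\ge2}a_j^4t^4$. Using $\sum_{j\ge2}a_j^4\le a_2^2\sigma^2$, the contribution from $[0,T]$ is at most
\[
c\,a_2^2\sigma^2\int_0^\infty t^2e^{-\sigma^2t^2/2}\,dt=O\!\left(\frac{a_2^2}{\sigma}\right).
\]
The tail of $G$ beyond $T$ is exponentially small, because $\sigma T\ge\sqrt3\,\pi/2$; this is where the hypothesis $a_1^2+3a_2^2\le1$, i.e.\ $\sigma^2\ge3a_2^2$, is used.

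The main obstacle is the oscillatory tail $\int_T^\infty|P(t)|\,t^{-2}\,dt$. The trivial bound $1/T\sim a_2$ is too weak, since we need order $a_2^2/\sigma$. I would apply Hölder's inequality with exponents $p_j=\sigma^2/a_j^2$; these satisfy $\sum_{j\ge2} 1/p_j=1$ and $p_j\ge3$. This gives
\[
\int_T^\infty\frac{|P|}{t^2}\,dt\le\prod_{j\ge2}\left(\int_T^\infty\frac{|\cos(a_jt)|^{p_j}}{t^2}\,dt\right)^{1/p_j}.
\]
In each factor substitute $u=a_jt$ and use $\pi$-periodicity and symmetry of $|\cos|$. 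This reduces the integral to a sum over periods of $\int_0^{\pi/2}\cos^{p}u\,du$ against a discretized kernel $\sum_k (k\pi\pm u)^{-2}$, which has to be bounded uniformly on $[0,\pi/2]$. Since $\int_0^{\pi/2}\cos^pu\,du\lesssim p^{-1/2}$, each factor should be $O\bigl(a_j\cdot p_j^{-1/2}/(a_jT)\bigr)=O(a_2\cdot a_j/\sigma)$. The weighted geometric mean of these bounds is then $O(a_2^2/\sigma)$.

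The last step is to make all the constants explicit: the quartic constant $c$, the kernel bound on $[0,\pi/2]$, and the Gaussian tail at $\sigma T\ge\sqrt3\pi/2$. The three error contributions must then be checked to sum to at most $\tfrac7{20}\sqrt{2/\pi}\,a_2^2/\sigma$. If the first-pass constants are too large, I would first sharpen the kernel estimate and use the sign of $\cos(a_1t)$ on part of $[0,T]$.
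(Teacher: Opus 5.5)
\textbf{There is a genuine gap: the decomposition around $\cos(a_1t)G(t)$ loses too much, and the unsigned target you reduce to is false.}

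\emph{Counterexample to the unsigned target.} Take $a_2=\cdots=a_{m+1}=\sigma/\sqrt m$ with $m\ge3$ and $a_1=\sqrt{1-\sigma^2}$; this is admissible. Let $m\to\infty$. Then $P=\cos^m(a_2t)$, and the terms behave as follows:
\begin{itemize}
\item On $[0,T]$, the quartic term in $\log\cos x=-x^2/2-x^4/12-\cdots$ gives $\frac2\pi\int_0^T(G-P)t^{-2}\,dt\sim\frac1{12}\sqrt{2/\pi}\,a_2^2/\sigma$.
\item On the tail, periodicity gives $\frac2\pi\int_T^\infty|P|t^{-2}\,dt=\frac{2a_2}{\pi}\int_0^{\pi/2}\cos^mu\,\bigl(\frac1{\sin^2u}-\frac1{u^2}\bigr)\,du\sim\frac13\sqrt{2/\pi}\,a_2^2/\sigma$.
\item The Gaussian tail of $G$ beyond $T$ is negligible.
\end{itemize}
Hence $\frac2\pi\int_0^\infty|G-P|t^{-2}\,dt\sim\frac5{12}\sqrt{2/\pi}\,a_2^2/\sigma$, and $\frac5{12}>\frac7{20}$.

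\emph{Why no error on $[0,T]$ is affordable.} The budget is $\frac7{20}=\frac13+\frac1{60}$. The $\frac13$ is already used up by the oscillatory tail, and that value is attained asymptotically once you bound $|\cos(a_1t)|\le1$ there, as your plan does. So $[0,T]$ may contribute essentially nothing.

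\emph{Your fallback does not rescue it.} Discarding the part of $[0,T]$ where $\cos(a_1t)\ge0$ is not enough. In the same family with $\sigma$ also small, $\cos(a_1t)$ oscillates fast on the scale $1/\sigma$. The remaining negative part is then about $\frac1{12\pi}\sqrt{2/\pi}\,a_2^2/\sigma$, and $\frac13+\frac1{12\pi}>\frac7{20}$.

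\emph{What is being thrown away.} The loss is the excess of $\E|a_1\varepsilon+\sigma Z|$ over $\sqrt{2/\pi}$. Your bound $\E|a_1\varepsilon+\sigma Z|\ge\sqrt{2/\pi}$ is true, since $b\mapsto\E|b\varepsilon+\sqrt{1-b^2}Z|$ is increasing, but it is too weak for this purpose.

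\emph{The missing idea.} Compare with $(\cos(a_1t))_+G(t)$ instead of $\cos(a_1t)G(t)$.
\begin{itemize}
\item Since $0\le P\le G$ on $[0,T]$, you get $\cos(a_1t)P(t)\le(\cos(a_1t))_+G(t)$ pointwise there, so $[0,T]$ contributes no error at all.
\item On $[T,\infty)$ you drop the nonnegative comparison term and pay only $\frac2\pi\int_T^\infty|P|t^{-2}\,dt$.
\item The price is a stronger comparison inequality: $\frac2\pi\int_0^\infty\bigl(1-(\cos(a_1t))_+e^{-\sigma^2t^2/2}\bigr)t^{-2}\,dt\ge\sqrt{2/\pi}$. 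The paper proves it by rescaling to $\int_0^\infty e^{-\lambda t^2}\bigl((\cos t)_+-e^{-t^2/2}\bigr)t^{-2}\,dt\le0$ for all $\lambda\ge0$. This uses that the numerator is nonpositive on $[0,3\pi/2]$ and that the value at $\lambda=0$ is explicitly $-(\sqrt\pi-\sqrt2)^2/4$.
\end{itemize}

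\textbf{Second gap: your tail step does not reach $\frac13$ as planned.} The kernel $\frac1{\sin^2u}-\frac1{u^2}=\sum_{k\ne0}(u+k\pi)^{-2}$ increases on $(0,\pi/2)$ from $\frac13$ to $1-\frac4{\pi^2}\approx0.59$. A uniform bound on $[0,\pi/2]$ therefore gives a tail constant of about $0.59>\frac7{20}$.

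You need an estimate that uses the concentration of $\cos^p$ near $0$. The paper combines $\sqrt{\cos u}\,\bigl(\frac1{\sin^2u}-\frac1{u^2}\bigr)\le\frac13$ (via Cusa--Huygens) with Kershaw's bound $F(s)\le\sqrt{2/\pi}\sqrt{s/(s+1/2)}$. This yields $\int_{\pi/2}^\infty|\cos t|^st^{-2}\,dt\le\frac13\sqrt{\pi/(2s)}$ and a total constant $\frac13+\frac1{96}<\frac7{20}$.

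\textbf{A smaller point:} $\log\cos x\ge-x^2/2-cx^4$ cannot hold on all of $[0,\pi/2]$ for any finite $c$, because the left side tends to $-\infty$ at $\pi/2$.
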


\begin{proof}[Proof of Theorem~\ref{thm:stronger}]
If $a_2=0$, then $a_1=1$ and the expectation equals $1>F(4)=3/4$. Assume therefore that $a_2>0$.

\smallskip
\noindent\emph{Case 1.} Suppose that
\begin{equation}\label{eq:caseone}
 a_1^4+(1-a_1^2)a_2^2\le\frac{(a_1+a_2)^2}{4}.
\end{equation}
Since $\sum_{j=2}^n a_j^4\le a_2^2(1-a_1^2)$, Lemma~\ref{lem:F} gives
\[
 \E\left|\sum_{j=1}^n a_j\varepsilon_j\right|
 \ge F\!\left(\frac1{\sum_{j=1}^n a_j^4}\right)
 \ge F\!\left(\frac4{(a_1+a_2)^2}\right).
\]

\smallskip
\noindent\emph{Case 2.} Suppose that \eqref{eq:caseone} fails. We show that this forces $a_2$ to be small relative to $a_1+a_2$. Indeed,
\begin{align*}
 &4\bigl(a_1^4+(1-a_1^2)a_2^2\bigr)-(a_1+a_2)^2\\
 &\qquad=(a_1-a_2)\bigl(4a_1^2(a_1+a_2)-a_1-3a_2\bigr)>0.
\end{align*}
Thus $a_1>a_2$ and $4a_1^2(a_1+a_2)>a_1+3a_2$. Multiplying by $a_1+a_2$ and using $(a_1+a_2)^2\le1$, we obtain
\[
 3a_1^2-4a_1a_2-3a_2^2
 =4a_1^2-(a_1+a_2)(a_1+3a_2)>0.
\]
Thus $r=a_1/a_2$ satisfies $3r^2-4r-3>0$, so $r>9/5$. Since $a_1+a_2\le1$,
\[
 1-a_1^2\ge1-\frac{a_1^2}{(a_1+a_2)^2}
          =\frac{a_2(2a_1+a_2)}{(a_1+a_2)^2}.
\]
Consequently,
\[
 \frac{a_2^4}{(1-a_1^2)(a_1+a_2)^4}
 \le\frac1{(2r+1)(r+1)^2}<\frac1{36}.
\]
In particular, $a_2^2/\sqrt{1-a_1^2}<(a_1+a_2)^2/6$. Since
$a_1^2+3a_2^2\le(a_1+a_2)^2\le1$, we may now apply Lemma~\ref{lem:tail} and conclude that
\begin{align*}
 \E\left|\sum_{j=1}^n a_j\varepsilon_j\right|
 &>\sqrt{\frac2\pi}\left(1-\frac7{120}(a_1+a_2)^2\right)\\
 &>\bigl(1-(a_1+a_2)^2\bigr)\sqrt{\frac2\pi}
   +\frac34(a_1+a_2)^2
 \ge F\!\left(\frac4{(a_1+a_2)^2}\right).
\end{align*}
Here the second step uses $\sqrt{2/\pi}>90/113$, which follows from $\pi<22/7$, and the last follows from the convexity of $s\mapsto F(1/s)$ and $F(4)=3/4$.

This proves the first inequality in \eqref{eq:stronger}. The second follows from \eqref{eq:scalar}, evaluated at $s=4/(a_1+a_2)^2$.

To check sharpness of the first inequality, take $2m$ coefficients equal to $1/\sqrt{2m}$, $m\ge2$. In Haagerup's Fourier argument, all terms in the arithmetic--geometric mean inequality then coincide and the product of the cosines is nonnegative, so the expectation equals $F(2m)=F(4/(a_1+a_2)^2)$.
\end{proof}

\begin{proof}[Proof of Theorem~\ref{thm:stable}]
Following \cite{ENT24}, write
\begin{equation}\label{eq:delta}
 \delta(a)=\left|a-\frac{e_1+e_2}{\sqrt2}\right|^2
          =2-\sqrt2\,(a_1+a_2).
\end{equation}
For $a_1+a_2\ge1$, Fang and Wang \cite[Theorem~1]{FW26} proved
\[
 \E\left|\sum_{j=1}^n a_j\varepsilon_j\right|
 \ge\frac1{\sqrt2}
 +\frac{\sqrt{1-\delta(a)/4}-\sqrt{\delta(a)}}{2\sqrt2}
       \sqrt{\delta(a)}.
\]
The coefficient of $\sqrt{\delta(a)}$ decreases on $0\le\delta(a)\le2-\sqrt2$ and equals $c$ at the right endpoint. Hence \eqref{eq:stable} holds in this range.

It remains to consider $a_1+a_2\le1$, or equivalently $2-\sqrt2\le\delta(a)<2$. By Theorem~\ref{thm:stronger}, it suffices to show that
\[
 \frac{19-\tfrac12(2-t)^2}{24}-\frac1{\sqrt2}-c\sqrt t\ge0,
 \qquad 2-\sqrt2\le t\le2.
\]
The left-hand side is concave in $t$, since its second derivative satisfies
\[
 -\frac1{24}+\frac{c}{4t^{3/2}}
 \le-\frac1{24}+\frac1{32}=-\frac1{96}<0.
\]
It vanishes at $t=2-\sqrt2$ and is positive at $t=2$. For the latter assertion, note that $19/24-1/\sqrt2>0$ and
\[
 \left(\frac{19}{24}-\frac1{\sqrt2}\right)^2-2c^2
 =\frac{17(3-2\sqrt2)^2}{576}>0.
\]
Concavity proves the desired inequality on the whole interval. For $a=\frac12(1,1,1,1,0,\ldots,0)$, the expectation is $3/4$ and $\delta(a)=2-\sqrt2$, giving equality in \eqref{eq:stable} and proving optimality.
\end{proof}

\subsection{Haagerup's bound and the special function}\label{subsec:F}
We now prove Lemma~\ref{lem:F}. Haagerup's argument \cite{Haa81}, recalled in \cite[Section~4.2, proof of Theorem~32]{NT23}, begins with the Fourier-integral identity
\[
 |x|=\frac2\pi\int_0^\infty\frac{1-\cos(tx)}{t^2}\,dt.
\]
Taking expectations and using independence, we obtain
\begin{equation}\label{eq:fourier}
 \E\left|\sum_{j=1}^n a_j\varepsilon_j\right|
 =\frac2\pi\int_0^\infty\left(1-\prod_{j=1}^n\cos(a_jt)\right)\frac{dt}{t^2}.
\end{equation}

Since $\sum_j a_j^2=1$, the arithmetic--geometric mean inequality yields
\[
 \prod_j\cos(a_jt)\le\prod_j|\cos(a_jt)|
 \le\sum_j a_j^2|\cos(a_jt)|^{1/a_j^2}.
\]
To evaluate the resulting integrals, use
$\sum_{k\in\Z}(t+k\pi)^{-2}=\sin^{-2}t$, which follows from Euler's product for $\sin t$. Using the periodicity and evenness of the numerator and then integrating by parts gives
\begin{equation}\label{eq:cosineintegral}
 \int_0^\infty\frac{1-|\cos t|^s}{t^2}\,dt
 =\int_0^{\pi/2}\frac{1-\cos^s t}{\sin^2t}\,dt
 =s\int_0^{\pi/2}\cos^s t\,dt,\qquad s>0.
\end{equation}
The beta integral therefore gives
\begin{equation}\label{eq:Fintegral}
 F(s)=\frac2{\pi\sqrt s}\int_0^\infty\frac{1-|\cos t|^s}{t^2}\,dt
     =\frac{2\sqrt s}{\pi}\int_0^{\pi/2}\cos^s t\,dt.
\end{equation}
Substituting the arithmetic--geometric mean bound into \eqref{eq:fourier} and changing variables in each term proves the first inequality in \eqref{eq:Haagerup}.

Formula \eqref{eq:Fintegral} gives $F(4)=3/4$ and, after the substitution $t=u/\sqrt s$, yields $F(s)\to\sqrt{2/\pi}$ as $s\to\infty$ by dominated convergence, using
\begin{equation}\label{eq:cosgaussian}
 0\le\cos t\le e^{-t^2/2},\qquad 0\le t\le\frac\pi2.
\end{equation}
This elementary bound follows by integrating $(\log\cos t)'=-\tan t\le-t$.

Logarithmic differentiation of Euler's product for the gamma function gives
\begin{align}
 (\log F)'(s)
 &=-\frac1{2s}+\sum_{k=0}^\infty
       \left(\frac1{s+2k}-\frac1{s+2k+1}\right)\notag\\
 &=\frac12\int_0^\infty e^{-st}\tanh(t/2)\,dt>0.
 \label{eq:logderivative}
\end{align}
Thus $F$ is increasing (see \cite[Section~4.2, proof of Theorem~32]{NT23} for an alternative proof). Integrating from $1/s$ to infinity and changing variables, we also obtain
\begin{equation}\label{eq:reciprocal}
 \log\frac{F(1/s)}{\sqrt{2/\pi}}
 =-\frac12\int_0^\infty e^{-t}\frac{\tanh(st/2)}t\,dt.
\end{equation}
Since $\tanh$ is increasing and concave, the right-hand side is decreasing and convex in $s$. Hence $s\mapsto F(1/s)$ is decreasing and logarithmically convex, in particular convex. Jensen's inequality with weights $a_j^2$ now proves the second inequality in \eqref{eq:Haagerup}.

To prove \eqref{eq:scalar}, note that $\tanh t\le t$ and
$(\tanh t-t+t^3/3)'=t^2-\tanh^2t\ge0$ imply $\tanh t\ge t-t^3/3$. By \eqref{eq:logderivative},
\[
 (\log F)'(s)\ge\frac1{4s^2}-\frac1{8s^4}.
\]
For $s\ge4$, use $F(s)\ge F(4)=3/4$ to get
\[
 F'(s)\ge\frac34\left(\frac1{4s^2}-\frac1{8s^4}\right)
 \ge\frac1{6s^2}.
\]
Integrating from $4$ to $s$ proves \eqref{eq:scalar} and completes the proof of Lemma~\ref{lem:F}. \hfill$\square$

\subsection{The estimate for the smaller coefficients}\label{subsec:tail}
For $0\le t\le\pi/(2a_2)$, the factors $\cos(a_jt)$ with $j\ge2$ are nonnegative and satisfy \eqref{eq:cosgaussian}. The next two lemmas give a lower bound for the Gaussian comparison integral and an upper bound for the cosine tails. Here $x_+=\max\{x,0\}$.

\begin{lemma}\label{lem:positive}
For every $0\le x\le1$,
\begin{equation}\label{eq:positive}
 \frac2\pi\int_0^\infty
 \frac{1-(\cos(xt))_+e^{-(1-x^2)t^2/2}}{t^2}\,dt
 \ge\sqrt{\frac2\pi}.
\end{equation}
\end{lemma}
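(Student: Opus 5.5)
The plan is to compare the left-hand side of \eqref{eq:positive} with its value at $x=0$, which is exactly $\sqrt{2/\pi}$. Put $y=\sqrt{1-x^2}$. At $x=0$ the integrand is $(1-e^{-t^2/2})/t^2$, and $\int_0^\infty(1-e^{-t^2/2})t^{-2}\,dt=\sqrt{\pi/2}$, so $\frac2\pi\sqrt{\pi/2}=\sqrt{2/\pi}$. Since $1=x^2+y^2$, we can write $\sqrt{2/\pi}=\frac2\pi\int_0^\infty (1-e^{-x^2t^2/2}e^{-y^2t^2/2})t^{-2}\,dt$. Therefore \eqref{eq:positive} is equivalent to
\[
 D(x):=\int_0^\infty\bigl(e^{-x^2t^2/2}-(\cos(xt))_+\bigr)e^{-y^2t^2/2}\,\frac{dt}{t^2}\ge0 .
\]
For $0<x$ I substitute $u=xt$ and set $\lambda=y^2/x^2$. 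This gives $D(x)=x\int_0^\infty\bigl(e^{-u^2/2}-(\cos u)_+\bigr)e^{-\lambda u^2/2}u^{-2}\,du$, so it suffices to show that $G(\lambda):=\int_0^\infty\bigl(e^{-u^2/2}-(\cos u)_+\bigr)e^{-\lambda u^2/2}u^{-2}\,du\ge0$ for every $\lambda\ge0$.

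I split $G(\lambda)$ into a gain and a loss. On $[0,\pi/2]$ the integrand is nonnegative by \eqref{eq:cosgaussian}. On $[\pi/2,3\pi/2]$ we have $(\cos u)_+=0$, so the integrand is again nonnegative. The gain is therefore
\[
 \int_0^{3\pi/2}\bigl(e^{-u^2/2}-(\cos u)_+\bigr)e^{-\lambda u^2/2}u^{-2}\,du
 \ \ge\ e^{-\lambda(3\pi/2)^2/2}\int_0^{3\pi/2}\bigl(e^{-u^2/2}-(\cos u)_+\bigr)u^{-2}\,du .
\]
Here I used that the weight $e^{-\lambda u^2/2}$ is decreasing and the integrand is nonnegative. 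The loss is at most $\int_{3\pi/2}^\infty(\cos u)_+e^{-\lambda u^2/2}u^{-2}\,du\le e^{-\lambda(3\pi/2)^2/2}\int_{3\pi/2}^\infty(\cos u)_+u^{-2}\,du$, again because the weight is decreasing. The common factor $e^{-\lambda(3\pi/2)^2/2}$ then drops out. This reduces the lemma to the single numerical inequality at $\lambda=0$:
\[
 \int_0^{3\pi/2}\bigl(e^{-u^2/2}-(\cos u)_+\bigr)\frac{du}{u^2}\ \ge\ \int_{3\pi/2}^\infty(\cos u)_+\frac{du}{u^2}.
\]
Equivalently, $G(0)$ with the tail handled this way is nonnegative. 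The case $x=0$ is the identity above.

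For this numerical inequality I would use the computation at $x=1$ as a consistency check. From \eqref{eq:Fintegral} with $s=1$ we get $\int_0^\infty(1-|\cos t|)t^{-2}\,dt=\frac\pi2F(1)=1$, and $\int_0^\infty(1-\cos t)t^{-2}\,dt=\pi/2$. Together these give $\int_0^\infty(1-(\cos t)_+)t^{-2}\,dt=\tfrac12+\tfrac\pi4$. Subtracting $\int_0^\infty(1-e^{-t^2/2})t^{-2}\,dt=\sqrt{\pi/2}$ yields
\[
 G(0)=\tfrac12+\tfrac\pi4-\sqrt{\pi/2}\approx0.032>0 ,
\]
so the full integral is positive. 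This is not quite what the split needs, because the split compares the gain on $[0,3\pi/2]$ with the whole tail beyond $3\pi/2$. I would bound that tail explicitly via periodicity: on each positivity interval $[2k\pi-\pi/2,2k\pi+\pi/2]$ with $k\ge1$, the integral of $(\cos u)_+u^{-2}$ is at most $2/(2k\pi-\pi/2)^2$. Summing these gives a small explicit number, roughly $0.12$. The gain $\int_0^{3\pi/2}(e^{-u^2/2}-(\cos u)_+)u^{-2}\,du$ can be bounded below by elementary estimates: use $1-u^2/2\le\cos u$ and $1-u^2/2+u^4/24\ge\cos u$ on $[0,\pi/2]$, together with $e^{-u^2/2}\ge1-u^2/2+u^4/8-u^6/48$.

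The main obstacle is that the margin is small, so these elementary bounds must be tight. The tail estimate is the delicate part. If the crude bound $2/(2k\pi-\pi/2)^2$ is too lossy, I would instead keep the exact value at $\lambda=0$, namely $G(0)=\frac12+\frac\pi4-\sqrt{\pi/2}$, which is a closed form. I would then only argue that the weight $e^{-\lambda u^2/2}$ cannot make $G(\lambda)$ negative. To do this, write $G(\lambda)=\int_0^\infty h(u)w(u)\,du$ with $w$ decreasing, and show that the partial integrals $H(v)=\int_0^v h$ are nonnegative for all $v$, since $h>0$ on $[0,3\pi/2]$. Then the second mean value theorem (Abel summation) gives $G(\lambda)\ge0$. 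Checking $H(v)\ge0$ requires only the value $H(\infty)=G(0)>0$ and the observation that $H$ is monotone on each interval where $h$ has constant sign. So one only needs to check $H$ at the right endpoints of the negativity intervals, $2k\pi+\pi/2$. These values decrease to $G(0)>0$ because the tail contributions after each such point are positive and negative alternately with decreasing size.
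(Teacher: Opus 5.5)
Your reduction to $G(\lambda)\ge0$ for $\lambda\ge0$, your sign analysis on $[0,3\pi/2]$, and your replacement of the decreasing weight by its value at $u=3\pi/2$ on both pieces are exactly the paper's argument, with $\lambda$ rescaled. The gap is in the final numerical step, which carries the real content. You say the reduced inequality is ``not quite what the split needs''. In fact it is exactly $G(0)$ up to a negligible term. Adding and subtracting the Gaussian on $[3\pi/2,\infty)$ gives
\[
 \int_0^{3\pi/2}\bigl(e^{-u^2/2}-(\cos u)_+\bigr)\frac{du}{u^2}
 -\int_{3\pi/2}^\infty(\cos u)_+\frac{du}{u^2}
 =G(0)-\int_{3\pi/2}^\infty e^{-u^2/2}\frac{du}{u^2}.
\]
You have $G(0)=\tfrac12+\tfrac\pi4-\sqrt{\pi/2}=(\sqrt\pi-\sqrt2)^2/4>1/49$. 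The Gaussian tail is at most $e^{-9\pi^2/8}/(3\pi/2)^3<8/729<1/49$. This finishes the proof, and it is precisely how the paper concludes.

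The route you actually propose for this step cannot succeed. Write $T=\int_{3\pi/2}^\infty(\cos u)_+u^{-2}\,du\approx0.0854$. The gain equals $G(0)+T$ minus the Gaussian tail, so it is about $0.1174$. Your tail bound is $\frac{2}{\pi^2}\sum_{k\ge1}(2k-\tfrac12)^{-2}\approx0.1288$. This exceeds even the exact value of the gain, so no Taylor estimates on $[0,\pi/2]$ can close it.

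Your fallback via Abel summation is valid in principle, but the reason you give for $H\ge0$ is wrong. The pieces of $\int_v^\infty h$ do not alternate with decreasing size. The positive pieces are astronomically small Gaussian contributions, while the negative ones are of order $k^{-2}$. The correct reason is simply $h(u)\le e^{-u^2/2}u^{-2}$. This gives, for $v\ge3\pi/2$,
\[
 H(v)=G(0)-\int_v^\infty h\ge G(0)-\int_{3\pi/2}^\infty e^{-u^2/2}u^{-2}\,du>0,
\]
which is the same numerical fact as above. Once you have that fact, the split you already set up suffices, and the Abel summation is unnecessary.
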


\begin{proof}
The case $x=0$ is an equality. For $x>0$, we subtract the Gaussian integral
\[
 \frac2\pi\int_0^\infty\frac{1-e^{-t^2/2}}{t^2}\,dt=\sqrt{\frac2\pi}
\]
from the left-hand side of \eqref{eq:positive} and make the change of variables $t=u/x$. It then suffices to prove
\begin{equation}\label{eq:weighted-comparison}
 \int_0^\infty e^{-\lambda t^2}
 \frac{(\cos t)_+-e^{-t^2/2}}{t^2}\,dt\le0,
 \qquad \lambda\ge0,
\end{equation}
and then take $\lambda=(1-x^2)/(2x^2)$. 

By \eqref{eq:cosgaussian}, the numerator in \eqref{eq:weighted-comparison} is nonpositive on $[0,3\pi/2]$. Moreover, $(\cos t)_+=(\cos t+|\cos t|)/2$, so the Fourier identity and \eqref{eq:cosineintegral} at $s=1$ give
\[
 \int_0^\infty\frac{(\cos t)_+-e^{-t^2/2}}{t^2}\,dt
 =\sqrt{\frac\pi2}-\frac\pi4-\frac12
 =-\frac{(\sqrt\pi-\sqrt2)^2}{4}<-\frac1{49}.
\]
In the last step, $3<\pi<4$ and $\sqrt2<3/2$ give
$(\sqrt\pi-\sqrt2)^2/4=(\pi-2)^2/[4(\sqrt\pi+\sqrt2)^2]>1/49$.
The Gaussian part of the tail satisfies
\[
 \int_{3\pi/2}^\infty\frac{e^{-t^2/2}}{t^2}\,dt
 \le\frac{e^{-9\pi^2/8}}{(3\pi/2)^3}<\frac8{729}<\frac1{49}.
\]

Since the numerator in \eqref{eq:weighted-comparison} is nonpositive on $[0,3\pi/2]$ and $e^{-\lambda t^2}$ is nonincreasing, replacing the latter by $e^{-9\lambda\pi^2/4}$ gives an upper bound on this interval. On the tail, the same replacement gives an upper bound after the negative Gaussian term has been discarded. Thus
\begin{align*}
 &\int_0^\infty e^{-\lambda t^2}
       \frac{(\cos t)_+-e^{-t^2/2}}{t^2}\,dt\\
 &\qquad\le e^{-9\lambda\pi^2/4}
 \left[-\frac{(\sqrt\pi-\sqrt2)^2}{4}
       +\int_{3\pi/2}^\infty\frac{e^{-t^2/2}}{t^2}\,dt\right]<0.
\end{align*}
This proves \eqref{eq:weighted-comparison} and hence the lemma.
\end{proof}

\begin{lemma}\label{lem:cosinetail}
For every $s>1/2$,
\begin{equation}\label{eq:cosinetail}
 \int_{\pi/2}^\infty\frac{|\cos t|^s}{t^2}\,dt
 \le\frac13\sqrt{\frac\pi{2s}}.
\end{equation}
\end{lemma}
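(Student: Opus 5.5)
The plan is to reduce the tail integral to one period of $|\cos t|^s$, exactly as in \eqref{eq:cosineintegral}, and then estimate the resulting kernel on $[0,\pi/2]$. Periodicity and evenness give
\[
 \int_{\pi/2}^\infty\frac{|\cos t|^s}{t^2}\,dt
 =\int_0^{\pi/2}\cos^s u\,\Bigl(\sum_{k\ge1}\frac1{(k\pi-u)^2}+\sum_{k\ge1}\frac1{(k\pi+u)^2}\Bigr)du .
\]
Here we split $[\pi/2,\infty)$ into the intervals $[k\pi-\pi/2,k\pi+\pi/2]$ and substitute $t=k\pi\pm u$. The bracket equals $\sin^{-2}u-u^{-2}=:K(u)$, by the identity $\sum_{k\in\Z}(u+k\pi)^{-2}=\sin^{-2}u$. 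The problem therefore becomes bounding $\int_0^{\pi/2}\cos^s u\,K(u)\,du$.

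\textbf{Bounding the kernel.} $K$ is positive and increasing on $(0,\pi/2]$, with $K(0)=1/3$ and $K(\pi/2)=1-4/\pi^2$. The key claim is $K(u)\le \frac13+cu^2$ for a suitable constant $c$; in fact, since $1-4/\pi^2\approx0.595$, any convex-type majorant would do. A cleaner target is $K(u)\le 1/(3\cos u)$ on $[0,\pi/2)$, equivalently $3\cos u\,(u^2-\sin^2u)\le u^2\sin^2 u$. I would verify this by Taylor comparison near $0$: the left side is $u^4-\dots$ and the right side is $u^4-\dots$, so the order-six terms must be checked. Away from $0$, a direct check suffices, and near $\pi/2$ it is trivial because the left side tends to $0$.

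\textbf{Gaussian comparison.} Assuming $K\le\frac1{3\cos u}$, the integral is at most $\frac13\int_0^{\pi/2}\cos^{s-1}u\,du$. Since $s>1/2$, the exponent is $s-1>-1/2$, so this integral converges, though only marginally for $s$ near $1/2$. In terms of \eqref{eq:Fintegral} it equals $\frac{\pi}{2\sqrt{s-1}}F(s-1)$, which is not obviously comparable with $\sqrt{\pi/(2s)}$, and the bound blows up as $s\downarrow1$ if $s-1$ is small. The alternative I would pursue is $K(u)\le\frac13+\alpha u^2$ together with \eqref{eq:cosgaussian}, $\cos^s u\le e^{-su^2/2}$. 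This gives
\[
 \int_0^{\pi/2}\cos^s u\,K(u)\,du\le \frac13\sqrt{\frac{\pi}{2s}}\cdot\Bigl(1+\frac{3\alpha}{s}\Bigr),
\]
which is slightly too weak. To recover the lost factor, I would instead use the exact value $\int_0^{\pi/2}\cos^s u\,du=\frac{\pi}{2\sqrt s}F(s)$, which is at most $\sqrt{\pi/(2s)}$ since $F\le\sqrt{2/\pi}$ by Lemma~\ref{lem:F}. I would then combine the leading $\frac13$ term with the gap $\sqrt{\pi/(2s)}-\frac{\pi}{2\sqrt s}F(s)$, which is of order $s^{-3/2}$, to absorb the $\alpha u^2$ correction. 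The correction is $\le\alpha\int u^2 e^{-su^2/2}=\alpha\sqrt{\pi/2}\,s^{-3/2}$, so we need $\alpha\sqrt{\pi/2}\le\frac13\bigl(\sqrt{\pi/2}-\tfrac{\pi}2F(s)\bigr)s$.

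\textbf{Main obstacle.} The hard step is this bookkeeping for the full range $s>1/2$, especially small $s$. There the Gaussian comparison is weak, so small $s$ may need a separate crude estimate. For example, I would bound $\cos^s u\le\cos^{1/2}u$ and the monotone kernel $K\le 1-4/\pi^2$, then compute numerically against $\frac13\sqrt{\pi/(2s)}$, which is largest for small $s$. Large $s$ would then be handled by the expansion above.
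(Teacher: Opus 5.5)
Your reduction to $\int_0^{\pi/2}\cos^s u\,K(u)\,du$ with $K(u)=\sin^{-2}u-u^{-2}$ is correct and is the paper's first step. However, neither of your two estimates for this integral can succeed, for any $s$.

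\textbf{The quadratic majorant.} Since $K(u)=\sum_{k\ge1}\bigl[(k\pi-u)^{-2}+(k\pi+u)^{-2}\bigr]$ has positive Taylor coefficients, $(K(u)-\frac13)/u^2$ is increasing. So a majorant $K\le\frac13+\alpha u^2$ on $[0,\pi/2]$ forces $\alpha\ge\frac4{\pi^2}\bigl(\frac23-\frac4{\pi^2}\bigr)\approx0.106$. Your absorption condition is $\alpha\le\frac s3\bigl(1-\sqrt{\pi/2}\,F(s)\bigr)$. Since $\sqrt{\pi/2}\,F(s)=1-\frac1{4s}+\frac1{32s^2}+O(s^{-3})$, its right-hand side equals $\frac1{12}-\frac1{96s}+O(s^{-2})$. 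Numerically it stays below $\frac1{12}\approx0.083$: it is about $0.054$ at $s=\frac12$ and $0.080$ at $s=4$. So the large-$s$ case fails too. A global quadratic majorant is dictated by the endpoint $u=\pi/2$, while your budget is dictated by the origin, where $K=\frac13+\frac{u^2}{15}+\cdots$.

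\textbf{The small-$s$ fallback.} It fails as well, since $(1-\frac4{\pi^2})\int_0^{\pi/2}\cos^{1/2}u\,du\approx0.595\cdot1.198\approx0.713$ exceeds $\frac13\sqrt{\pi/(2s)}\le\frac13\sqrt\pi\approx0.591$.

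\textbf{The majorant $1/(3\cos u)$.} You were right to abandon it, but not because it blows up at $s=1$; it does not. It fails because Gautschi's inequality $\Gamma(x+\frac12)<\sqrt x\,\Gamma(x)$ gives $\frac13\int_0^{\pi/2}\cos^{s-1}u\,du=\frac{\sqrt\pi}{6}\,\Gamma(\frac s2)/\Gamma(\frac{s+1}2)>\frac13\sqrt{\pi/(2s)}$ for every $s$.

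\textbf{The missing idea} is the intermediate weight $\cos^{-1/2}$. The paper proves $\sqrt{\cos u}\,K(u)\le\frac13$ on $(0,\pi/2)$ by showing that the left-hand side is nonincreasing with limit $\frac13$ at $0$. The monotonicity comes from a derivative computation combined with the Cusa--Huygens inequality $\sin u/u\le(2+\cos u)/3$. This majorant equals $\frac13+\frac{u^2}{12}+\cdots$ near the origin, exactly at your $\frac1{12}$ threshold. It grows near $\pi/2$, but there the factor $\cos^s$ absorbs it because $s>\frac12$, so the large endpoint value of $K$ costs nothing. The lemma then reduces to the single Beta integral $\frac13\int_0^{\pi/2}\cos^{s-1/2}u\,du=\frac{\pi F(s-1/2)}{6\sqrt{s-1/2}}$. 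Kershaw's bound $F(\sigma)\le\sqrt{2/\pi}\sqrt{\sigma/(\sigma+\frac12)}$ at $\sigma=s-\frac12$ turns this into exactly $\frac13\sqrt{\pi/(2s)}$, uniformly for $s>\frac12$ and with no case distinction.
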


\begin{proof}
Using periodicity and evenness as in \eqref{eq:cosineintegral}, we obtain
\[
 \int_{\pi/2}^\infty\frac{|\cos t|^s}{t^2}\,dt
 =\int_0^{\pi/2}\cos^s t\left(\frac1{\sin^2t}-\frac1{t^2}\right)\,dt.
\]
We first prove the estimate
\begin{equation}\label{eq:kernel}
 \sqrt{\cos t}\left(\frac1{\sin^2t}-\frac1{t^2}\right)\le\frac13,
 \qquad 0<t<\frac\pi2.
\end{equation}
We use the Cusa--Huygens inequality $\sin t/t\le(2+\cos t)/3$, which follows since $t(2+\cos t)-3\sin t$ vanishes at zero and has derivative
\[
 2\sin(t/2)\bigl(2\sin(t/2)-t\cos(t/2)\bigr)\ge0.
\]
Differentiation then yields
\begin{align*}
 &2\sqrt{\cos t}\,\sin^3t\,\frac{d}{dt}
    \left[\sqrt{\cos t}\left(\frac1{\sin^2t}-\frac1{t^2}\right)\right]\\
 &\qquad=4\cos t\left(\frac{\sin t}{t}\right)^3
       +\sin^2t\left(\frac{\sin t}{t}\right)^2-1-3\cos^2t\\
 &\qquad\le-\frac{(1-\cos t)^3(\cos t+15)}{27}\le0.
\end{align*}
The first inequality follows by substituting $(2+\cos t)/3$ for $\sin t/t$ in the increasing polynomial on the preceding line. Thus the left-hand side of \eqref{eq:kernel} is nonincreasing, and its limit at zero is $1/3$. This proves \eqref{eq:kernel}.

Kershaw's inequality \cite[(1.3)]{Ker83}, applied with $x=s/2$ and parameter $1/2$, gives
\begin{equation}\label{eq:gamma-upper}
 F(s)\le\sqrt{\frac2\pi}\sqrt{\frac{s}{s+1/2}},\qquad s>0.
\end{equation}
Combining \eqref{eq:kernel}, \eqref{eq:Fintegral} and \eqref{eq:gamma-upper}, we obtain
\begin{align*}
 \int_{\pi/2}^\infty\frac{|\cos t|^s}{t^2}\,dt
 &\le\frac13\int_0^{\pi/2}\cos^{s-1/2}t\,dt
 =\frac{\pi F(s-1/2)}{6\sqrt{s-1/2}}\\
 &\le\frac13\sqrt{\frac\pi{2s}}.\qedhere
\end{align*}
\end{proof}

\begin{proof}[Proof of Lemma~\ref{lem:tail}]
We split the integral in \eqref{eq:fourier} at $\pi/(2a_2)$. On the first interval, \eqref{eq:cosgaussian} gives
\[
 \prod_{j=1}^n\cos(a_jt)
 \le(\cos(a_1t))_+e^{-(1-a_1^2)t^2/2}.
\]
For the tail, the arithmetic--geometric mean inequality with weights $a_j^2/(1-a_1^2)$, $j\ge2$, gives
\[
 \prod_{j=2}^n|\cos(a_jt)|
 \le\sum_{j=2}^n\frac{a_j^2}{1-a_1^2}
             |\cos(a_jt)|^{(1-a_1^2)/a_j^2}.
\]
When we subtract the comparison integral in Lemma~\ref{lem:positive} from \eqref{eq:fourier}, the first interval contributes nonnegatively. On the tail, we discard the nonnegative comparison term and use $|\cos(a_1t)|\le1$ to obtain
\begin{equation}\label{eq:tailstart}
 \E\left|\sum_{j=1}^n a_j\varepsilon_j\right|
 \ge\sqrt{\frac2\pi}
 -\frac2\pi\sum_{j=2}^n\frac{a_j^2}{1-a_1^2}
 \int_{\pi/(2a_2)}^\infty
 \frac{|\cos(a_jt)|^{(1-a_1^2)/a_j^2}}{t^2}\,dt.
\end{equation}

We split each integral on the right at $\pi/(2a_j)\ge\pi/(2a_2)$, applying \eqref{eq:cosgaussian} before this point and, after a change of variables, Lemma~\ref{lem:cosinetail} afterwards. Since $(1-a_1^2)/a_j^2\ge3$, we obtain
\begin{align*}
 &\int_{\pi/(2a_2)}^\infty
 \frac{|\cos(a_jt)|^{(1-a_1^2)/a_j^2}}{t^2}\,dt\\
 &\qquad\le\int_{\pi/(2a_2)}^\infty\frac{e^{-(1-a_1^2)t^2/2}}{t^2}\,dt
     +\frac13\sqrt{\frac\pi2}\frac{a_j^2}{\sqrt{1-a_1^2}}.
\end{align*}
Substituting in \eqref{eq:tailstart} and using $\sum_{j=2}^n a_j^4\le a_2^2(1-a_1^2)$, we arrive at
\begin{align}\label{eq:tailalmost}
 \E\left|\sum_{j=1}^n a_j\varepsilon_j\right|
 &\ge\sqrt{\frac2\pi}
       \left(1-\frac{a_2^2}{3\sqrt{1-a_1^2}}\right)\notag\\
 &\qquad-\frac2\pi\int_{\pi/(2a_2)}^\infty
                    \frac{e^{-(1-a_1^2)t^2/2}}{t^2}\,dt.
\end{align}

It remains to absorb the Gaussian remainder. On its range of integration,
$t^{-2}\le t/\bigl(\pi/(2a_2)\bigr)^3$, and therefore
\[
 \frac2\pi\int_{\pi/(2a_2)}^\infty
        \frac{e^{-(1-a_1^2)t^2/2}}{t^2}\,dt
 \le\frac{16a_2^3}{\pi^4(1-a_1^2)}
           e^{-\pi^2(1-a_1^2)/(8a_2^2)}.
\]
Dividing the last expression by $\sqrt{2/\pi}\,a_2^2/\sqrt{1-a_1^2}$ gives
\[
 \frac8{\pi^3}\sqrt{\frac2\pi}\frac{a_2}{\sqrt{1-a_1^2}}
 e^{-\pi^2(1-a_1^2)/(8a_2^2)}
 <\frac{16}{81}\left(\frac38\right)^3
 =\frac1{96}<\frac1{60}.
\]
Here we used $(1-a_1^2)/a_2^2\ge3$, $\pi>3$ and $e>8/3$: they imply $a_2/\sqrt{1-a_1^2}<2/3$ and bound the exponential by $e^{-3}<(3/8)^3$. Combining this with \eqref{eq:tailalmost} and $1/3+1/60=7/20$ proves \eqref{eq:tail}.
\end{proof}

\end{document}